\def\amsbb{\use@mathgroup \M@U \symAMSb}
\def\BState{\State\hskip-\ALG@thistlm}
\newtheorem{theorem}{Theorem}
\newtheorem{lemma}[theorem]{Lemma}
\newenvironment{proof}{\noindent{\it Proof.}}{\hfill$\square$}
\begin{document}
\begin{frontmatter}

\title{A Monte Carlo method for computing the action of a matrix exponential on a vector}
\author[add1,add2]{Juan~A.~Acebr\'on}
\ead{juan.acebron@iscte-iul.pt}

\address[add1]{Dept. Information Science and Technology, ISCTE-University Institute of Lisbon, Portugal} 
\address[add2]{INESC-ID,Instituto Superior T\'ecnico, Universidade de Lisboa, Portugal}

\begin{abstract}
A Monte Carlo method for computing the action of a matrix exponential for a certain class of matrices on a vector is proposed. The method is based 
on generating random paths, which evolve through the indices of the matrix, governed by a given continuous-time Markov chain. The vector solution
is computed probabilistically by averaging over a suitable multiplicative functional. This representation extends the existing linear algebra Monte Carlo-based methods, and was used
in practice to develop an efficient algorithm capable of computing both, a single entry or the full vector solution. Finally, 
several relevant benchmarks were executed to assess the performance of the algorithm. A comparison with the results obtained with a Krylov-based method shows the remarkable performance
of the algorithm for solving large-scale problems. 

\end{abstract}

\begin{keyword}
  Monte Carlo methods, matrix functions, network analysis, communicability
  \PACS 65C05 \sep 65C20 \sep 65N55 \sep 65M75 \sep 65Y20
\end{keyword}

\end{frontmatter}
\section{Introduction}

Computing the action of a matrix function on a vector is experiencing these days a reborn interest. This is not because of the absence of relevant applications in science and engineering in the past, but rather because the improvement in the numerical methods, along with the advent of highly massive parallel computers, now allows one to be able to attack more realistic problems on a large scale, far beyond the merely academic problems capable of being simulated in the past. These applications include circuit simulations \cite{Cheng1}; power grid simulation \cite{Cheng2,Sadiku}; nuclear reaction simulations 
\cite{Pusa}; analysis of transient solutions in Markov chains \cite{Sidjea}; simulations of quantum systems \cite{Rudi}; numerical solution of partial differential equations (PDEs) \cite{Hockbruck,Mattheij}; and analysis of complex networks \cite{Benzi}.

Circuit and power grid simulation play an important role during the design of integrated circuits, being in general a heavy computationally task of the whole design process. In the analysis
of a reactor fuel, a computational heavy task  of the analysis consists in solving the burnup equations describing the rates of the concentration of the different nuclides of the nuclear fuel.
Computing the action of a matrix exponential over the initial state appears as an important numerical alternative among the different available techniques, such as integrating the Chapman-Kolmogorov system of differential equations for obtaining the transient solution of homogeneous irreducible Markov chains. In the field of partial differential equations, numerically solving a boundary-value PDE problem by means of the method of lines requires in practice to compute the action of a matrix exponential over the initial condition.  
Finally, in network analysis, determining some important metrics of the network, such as for instance the total communicability which characterizes the importance of the nodes inside 
the network, entails computing the exponential of the adjacency matrix of the network.

Over the last few years many numerical methods have been proposed for computing the action of a matrix function over a vector. There are already excellent reviews in the literature describing the different numerical methods proposed so far (see \cite{Caliari,Higham,Higham2,Higham3}, e.g.); therefore it is not intended to go into any details here. Instead we will briefly describe some of them for those readers not familiar with the topic. Essentially we can classify these methods as follows: Krylov-based subspace methods, contour integration methods, ordinary differential equation methods, and polynomial or rational methods. One of the most studied methods in theory, and used in practice, are those based on the Krylov subspace method. The idea behind the method consists in projecting the given (typically large) matrix onto a Krylov subspace. For the specific case of an exponential function, through a basis of the subspace constructed using the Arnoldi process, the exponential of the projected matrix is computed by using a standard technique based typically on the squaring and scaling method \cite{Higham2}.

The idea of using probabilistic methods based on Monte Carlo simulations for computing functions of matrices goes back to the pioneering work of von Neumann and Ulam during the 
1940's \cite{Forsythe}. Although initially the method was proposed merely for computing the inverse of a matrix, it was later generalized for solving linear algebra problems in a series
of seminal papers, see \cite{dimov2} e.g., and \cite{dimov} for further references. Briefly the underlying idea consists in generating a discrete Markov chain 
which evolves by random paths through the different indices of the matrix. Mathematically, the method can be seen in a way as a Monte Carlo sampling of the Neumann series of the matrix.
The convergence of the method was rigorously established in \cite{Mascagni}, and improved further more recently (see for instance \cite{dimov4}, and \cite{Benzi3} just to cite a few references).
More recently, and for the specific case of computing the action of a Hermitian matrix exponential over a vector, which is of interest in Quantum Physics, 
it has been proposed in \cite{Rudi} an efficient algorithm based on a novel randomized linear algebra technique known in the literature as the Nystr\"om method.

These probabilistic methods offer important computational advantages. Furthermore, the algorithms  
are much simpler to code than their deterministic counterparts, which impact positively in promoting an easy further optimization of the codes; it turns out that they are specially suited for parallel computing. This is because the solution is often computed through an expectation value of a given finite sample, the simulations are independent from each other. This is of paramount importance because it allows for the development of parallel codes with extremely low communication overhead among processors, and has a positive effect on properties such as scalability and fault-tolerance. For the parallel implementation of the Monte Carlo method for solving linear algebra problems see \cite{dimov3} e.g.

Another important advantage of the probabilistic methods consists in the feasibility of computing the solution of the problem at specific chosen points, 
without the need for solving globally the entire problem. This remarkable feature offers important advantages in dealing with some specific applications found in science and engineering, and it
has been explored for efficiently solving continuous problems such as boundary-value problems for PDEs in \cite{Acebron1,Acebron3,Acebron4}, and references therein. However, this is not an exclusive advantage of the probabilistic methods. In fact, for the specific problem of matrix functions, it has been proposed in the literature several methods \cite{Golub} capable also of estimating individual entries of the matrix function. The main idea consists in applying several quadrature rules along with a single iteration step of the Lanczos algorithm to obtain {\it a priori} lower and upper bounds. Moreover the bounds can be further improved {\it a posteriori} using several iterations of the Lanczos algorithm in the quadrature formula.  This idea has been applied succesfully to the problem of estimating different metrics of complex networks in \cite{Benzi4}. The computational cost has been estimated to grow linearly with the matrix size in the best case, since for each iteration of the Lanczos algorithm is required to compute a matrix-vector multiplication.

The purpose of this paper is to extend the existing aforementioned Monte Carlo methods for dealing with other functions of matrices, such as the matrix exponential, and 
more specifically for the problem of computing the action of a matrix exponential on a vector for a certain class of matrices. This is done by resorting to a probabilistic representation of the vector solution based
on generating random paths corresponding to samples of a suitable continuous-time Markov chain. The convergence of the method was conveniently analyzed, as well as the computational cost
estimated. In addition, several relevant numerical examples, extracted from network analysis are given, focusing on both,  the accuracy and performance of the method.

The paper is organized as follows. The probabilistic representation of the vector solution is presented in Sec.~\ref{theory}. In Sec.~\ref{algorithm}, it is explained how the probabilistic representation can be implemented in practice. Secs. \ref{complexity} and \ref{errors} are devoted to the analysis of the computational cost of the algorithm, and the associated numerical errors of the method, respectively. Finally in Sec. \ref{simulations} several benchmarks are executed to assess the performance of the method in comparison with the performance obtained by the classical Krylov-based method. To conclude we summarize the main results and discuss potential directions for future research.

\section{The numerical method}\label{theory}
Let $A=\{a_{ij}\}_{i,j=1}^n$ a given sparse $n$-by-$n$ symmetric matrix, $v$ an n-dimensional vector, and $x$ an n-dimensional vector solution of 
evaluating $e^{\beta A}\,v$. We assume that $A$ can be decomposed as $A=D-L$, where $L$ is the Laplacian matrix symmetric and 
irreducible \cite{Merris}, and $D$ a diagonal matrix with entries $d_i, i=1,\ldots,n$.
Since in general both matrices do not commute, it does not hold that  $e^{\beta A}\,v=e^{-\beta L}\,e^{\beta D}v$. However, an approximation of the matrix
exponential can be easily obtained by resorting to the exponential Lie splitting, and yields
\begin{equation}
\bar{x}^L= (e^{-\Delta t L} e^{\Delta t D})^N\,v \approx e^{\beta A}\,v. \label{deff}
\end{equation}
Here $\Delta t=\beta/N$, and in the following for convenience it will termed as the time step.
It is known that the local error $\varepsilon_L=x-\bar{x}^L$ of the Lie splitting after one time step is given by
\begin{equation}
\varepsilon_L=\frac{\Delta t^2}{2} [D,L]\,v+O(\Delta t^3),\label{errorLie}
\end{equation}
being in general of order of $O(\Delta t)$ for the global error. A higher order approximation does exist, and in view of being the matrix $D$ diagonal, it can be computed without any additional computational cost. In fact, the well known Strang splitting  yields,
\begin{equation}
\bar{x}^S=\left(e^{\Delta t D/2}e^{-\Delta t L} e^{\Delta t D/2}\right)^N\,v. \label{eq_general1}
\end{equation}
The local error after one time step $\varepsilon_S=x-\bar{x}^S$ of this approximation is known \cite{Jahnke} to be
\begin{equation}
\varepsilon_S=\Delta t^3(\frac{1}{12} [D,[D,L]]-\frac{1}{24} [L,[L,D]])\,v+O(\Delta t^4),\label{errorStrang}
\end{equation}
and globally of order of $O(\Delta t^2)$.
The next lemma will be used to derive a probabilistic representation for the vector solution $\bar{x}^S$, in the following denoted as $\bar{x}$ for simplicity. To this purpose we need first to prove the following useful fact 
for the partial solution $e^{\Delta t D/2}e^{-\Delta t L} e^{\Delta t D/2}\,v$. 

\begin{lemma}
Assume $j$ is a discrete random variable that takes values on $S=\{1,2,\cdots,n\}$ with probability $p_{ij}(t)$ given by the transition probabilities of a continuous-time Markov chain generated by the infinitesimal generator $Q=-(L)_{ij}$ and evaluated at time $\Delta t$. Then, any entry $i$ of the vector 
\begin{equation}
y=e^{\Delta t D/2}e^{-\Delta t L} e^{\Delta t D/2}\,v,
\end{equation}
can be represented as $y_i=e^{\Delta t\,d_i/2}\mathbf{E}[\eta]$, 
with $\eta=e^{\Delta t\,d_j/2}\,v_j$, and 
$\mathbf{E}[\eta]$ its expected value.
\label{lemma1}
\end{lemma}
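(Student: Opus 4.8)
The plan is to unwind the three factors in $y=e^{\Delta t D/2}e^{-\Delta t L}e^{\Delta t D/2}v$ one at a time, reading off the probabilistic content of the middle factor. First I would observe that $Q=-L$ is indeed a valid infinitesimal generator of a continuous-time Markov chain on $S=\{1,\dots,n\}$: since $L$ is a (symmetric, irreducible) Laplacian matrix, its off-diagonal entries are $\le 0$ so $Q_{ij}=-(L)_{ij}\ge 0$ for $i\ne j$, and its row sums vanish, so $\sum_j Q_{ij}=0$. Consequently the matrix exponential $P(t)=e^{tQ}=e^{-tL}$ is exactly the transition probability matrix of that chain, i.e. $\bigl(e^{-\Delta t L}\bigr)_{ij}=p_{ij}(\Delta t)$, and these are genuine probabilities (nonnegative, summing to one over $j$). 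This is the one structural fact that makes the representation meaningful, and I expect verifying it cleanly — especially noting exactly where symmetry/irreducibility is and is not needed — to be the main (though modest) obstacle; everything after it is bookkeeping.

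Next I would compute $y_i$ entrywise. Because $D$ is diagonal, $e^{\Delta t D/2}$ is diagonal with entries $e^{\Delta t d_i/2}$, so multiplication by it on the left simply scales row $i$ by $e^{\Delta t d_i/2}$, and multiplication on the right scales column $j$ by $e^{\Delta t d_j/2}$. Writing the product out,
\begin{equation}
y_i=e^{\Delta t d_i/2}\sum_{j=1}^n \bigl(e^{-\Delta t L}\bigr)_{ij}\,e^{\Delta t d_j/2}\,v_j
   =e^{\Delta t d_i/2}\sum_{j=1}^n p_{ij}(\Delta t)\,\bigl(e^{\Delta t d_j/2} v_j\bigr).
\end{equation}
The sum on the right is, by definition of expectation for the discrete random variable $j$ with law $p_{ij}(\Delta t)$, exactly $\mathbf{E}[\eta]$ with $\eta=e^{\Delta t d_j/2}v_j$. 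Hence $y_i=e^{\Delta t d_i/2}\,\mathbf{E}[\eta]$, which is the claimed identity.

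Finally I would remark on the hypotheses actually used: symmetry of $L$ guarantees $e^{-\Delta t L}$ is well defined and, together with the Laplacian sign pattern and zero row sums, that its entries form an honest probability distribution; irreducibility is not needed for this lemma per se but is carried along because it is used later for the Markov-chain path construction. No convergence or error analysis enters here — the statement is an exact algebraic identity — so the proof is short, and the only place to be careful is the identification $e^{tQ}=P(t)$ and the nonnegativity of the $p_{ij}(\Delta t)$.
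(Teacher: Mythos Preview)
Your proof is correct and follows essentially the same route as the paper: expand $y_i$ using the diagonality of $D$, recognize $-L$ as a valid infinitesimal generator so that $(e^{-\Delta t L})_{ij}=p_{ij}(\Delta t)$, and identify the remaining sum as $\mathbf{E}[\eta]$. The only minor inaccuracy is in your closing remark---symmetry of $L$ is not what guarantees $e^{-\Delta t L}$ is well defined (the matrix exponential always exists) nor what makes $-L$ a generator (that comes solely from the sign pattern and zero row sums)---but this does not affect the argument itself.
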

\begin{proof}
Since $D$ is a diagonal matrix, $y_i$ can be computed as follows
\begin{equation}
y_i=\sum_{j=1}^n e^{\Delta t\,d_i/2}(e^{-\Delta t L})_{ij} e^{\Delta t\,d_j/2}\,v_j.\label{lema1.1}
\end{equation}
By the definition of the unnormalized Laplacian matrix of a graph $G$, $L(G)$ is a matrix with diagonal elements $L_{ii}$ equal to the degree of each vertex $d_i$,  and the off-diagonal $L_{ij}$, $-1$ if $(i,j)$ is an edge, or $0$ otherwise. Therefore, it follows that $\sum_j L_{ij}=0$, and $L_{ii}>0$, and hence the matrix $Q=-L$ can be assumed to be a generator of a suitable continuous-time Markov chain on the state space $S=\{1,2,\ldots,n\}$. Then, 
\begin{equation}
y_i=e^{\Delta t\,d_i/2} \sum_{j=1}^n p_{ij}\,(\Delta t)e^{\Delta t\,d_j/2}\,v_j,
\end{equation}
where $p_{ij}(t)$ are the corresponding transition probabilities of the Markov chain evaluated at time $\Delta t$, solution of the Kolmogorov's backward equations,
\begin{equation}
P'(t)=Q\,P(t),\quad P(0)=\mathbbm{1}\quad \quad (t\geq 0),\label{backward}
\end{equation}
for the matrix transition probability $P=(p_{ij})$.
\end{proof}

Note that such a probabilistic representation allows in practice to compute a single entry $i$ of the vector solution. This is done by generating suitable random paths, corresponding to a continuous-time Markov chain, which evolve backward in time from the state $i$ at $t=\Delta t$ to a final state on S for $t=0$. Finally, the chosen entry is computed by averaging the functional $\eta$ over the sample. Such a functional depends on the initial vector $v$ and the diagonal matrix $D$. Moreover,  applying this Lemma to Eq. (\ref{eq_general1}) allow us to derive the following general theorem.

\begin{theorem}\label{th1}
Let $i_k$, $k=1,\ldots,N$, a sequence of $N$ discrete random variables with outcomes on $S=\{1,2,\cdots,n\}$. The probabilities $p_{i_{k-1}\,i_k}(t)$, $k=2,\ldots,N$, and $p_{i\,i_1}(t)$ for $k=1$,  correspond to the transition probabilities of a continuous-time Markov chain generated by the same infinitesimal generator $Q=-L$ and evaluated at time $\Delta t$ for each $k$. Then, we have that any entry $i$ of the vector solution $\bar{x}$ in Eq. (\ref{eq_general1}) can be  represented probabilistically as
\begin{equation}
\bar{x}_i=e^{\Delta t\,d_i/2}\mathbf{E}[\prod_{k=1}^N \eta_k],
\end{equation}
where $\eta_k= e^{\Delta t\,d_{i_k}}$, $k=1,\ldots,N-1$, and $\eta_N=e^{\Delta t\,d_{i_N}/2}\,v_{i_N}$.
\label{theor1}
\end{theorem}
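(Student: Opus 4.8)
The plan is to reduce everything to a repeated application of Lemma~\ref{lemma1}. The first step is purely algebraic: in the $N$-fold product defining $\bar x$ in Eq.~(\ref{eq_general1}), the half-step diagonal factors of consecutive blocks merge, $e^{\Delta t D/2}\,e^{\Delta t D/2}=e^{\Delta t D}$, so that
\begin{equation}
\bar x=e^{\Delta t D/2}\,e^{-\Delta t L}\,e^{\Delta t D}\,e^{-\Delta t L}\,e^{\Delta t D}\cdots e^{\Delta t D}\,e^{-\Delta t L}\,e^{\Delta t D/2}\,v,
\end{equation}
with exactly $N$ factors $e^{-\Delta t L}$ and $N-1$ interior factors $e^{\Delta t D}$, flanked by one $e^{\Delta t D/2}$ on each side. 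Writing the $i$-th entry componentwise and using that $D$ is diagonal gives a sum over all index sequences $i_1,\dots,i_N\in S$,
\begin{equation}
\bar x_i=e^{\Delta t d_i/2}\sum_{i_1,\dots,i_N}(e^{-\Delta t L})_{i\,i_1}\,e^{\Delta t d_{i_1}}(e^{-\Delta t L})_{i_1\,i_2}\,e^{\Delta t d_{i_2}}\cdots e^{\Delta t d_{i_{N-1}}}(e^{-\Delta t L})_{i_{N-1}\,i_N}\,e^{\Delta t d_{i_N}/2}\,v_{i_N}.
\end{equation}

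Next I would invoke the identification already made in the proof of Lemma~\ref{lemma1}: since $Q=-L$ generates a continuous-time Markov chain on $S$, the matrix entries $(e^{-\Delta t L})_{ab}=(e^{Q\Delta t})_{ab}=p_{ab}(\Delta t)$ are transition probabilities, solutions of the backward equations~(\ref{backward}). By the Markov property (Chapman--Kolmogorov), the product $p_{i\,i_1}(\Delta t)\,p_{i_1\,i_2}(\Delta t)\cdots p_{i_{N-1}\,i_N}(\Delta t)$ is exactly the joint law of the sequence $(i_1,\dots,i_N)$ described in the statement, namely the path started at $i$ and sampled at time increments $\Delta t$. Collapsing the sum into an expectation over that path then yields
\begin{equation}
\bar x_i=e^{\Delta t d_i/2}\,\mathbf{E}\Big[e^{\Delta t d_{i_1}}\cdots e^{\Delta t d_{i_{N-1}}}\,e^{\Delta t d_{i_N}/2}\,v_{i_N}\Big]=e^{\Delta t d_i/2}\,\mathbf{E}\Big[\prod_{k=1}^N\eta_k\Big],
\end{equation}
with $\eta_k=e^{\Delta t d_{i_k}}$ for $k=1,\dots,N-1$ and $\eta_N=e^{\Delta t d_{i_N}/2}v_{i_N}$, which is the claim.

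The proof has no real obstacle; the only points requiring care are the telescoping bookkeeping in the first step (keeping a single $e^{\Delta t D/2}$ at each end while the interior half-steps fuse into $e^{\Delta t D}$) and the explicit justification that a product of one-step transition probabilities is the law of the Markov path. A fully inductive variant avoids the first bookkeeping: set $\bar x^{(0)}=v$ and $\bar x^{(m)}=(e^{\Delta t D/2}e^{-\Delta t L}e^{\Delta t D/2})\,\bar x^{(m-1)}$, apply Lemma~\ref{lemma1} with $v$ replaced by $\bar x^{(m-1)}$ at each step $m=1,\dots,N$, and chain the resulting one-step conditional expectations by the tower property; the two adjacent half-step diagonal factors produced by consecutive applications of the lemma then combine into $\eta_k=e^{\Delta t d_{i_k}}$, while the outermost prefactor $e^{\Delta t d_i/2}$ and the innermost $e^{\Delta t d_{i_N}/2}v_{i_N}$ survive as in the statement.
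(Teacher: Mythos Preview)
Your proof is correct and follows essentially the same route as the paper's own argument: expand $\bar x_i$ componentwise as a multiple sum over $i_1,\dots,i_N$, then invoke Lemma~\ref{lemma1} to identify $(e^{-\Delta t L})_{ab}=p_{ab}(\Delta t)$ and rewrite the sum as an expectation. The only cosmetic difference is that you merge the adjacent half-step diagonal factors into $e^{\Delta t D}$ before expanding, whereas the paper keeps them as $e^{\Delta t d_{i_k}/2}e^{\Delta t d_{i_k}/2}$ in the intermediate display; your added remark that the product of one-step transition probabilities is the joint law of the sampled path (via the Markov property) makes explicit a step the paper leaves implicit.
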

\begin{proof}
In view of $D$ being a diagonal matrix, from Eq. (\ref{eq_general1}) the entry $i$ of the vector $\bar{x}$ is given by
\begin{eqnarray}
\bar{x}_i=e^{\Delta t\,d_i/2}\sum_{{i_1}=1}^n\sum_{{i_2}=1}^n\cdots \sum_{{i_N}=1}^n 
(e^{-\Delta t L})_{{i}{i_1}} e^{\Delta t\,d_{i_1}/2} e^{\Delta t\,d_{i_1}/2} (e^{-\Delta t L})_{{i_1}{i_2}} e^{\Delta t\,d_{i_2}/2}\cdots \nonumber\\
\times  e^{\Delta t\,d_{i_{N-1}}/2}(e^{-\Delta t L})_{{i_{N-1}}{i_N}} e^{\Delta t\,d_{i_N}/2}\,v_{i_N}.\quad\quad\label{general_result}
\end{eqnarray}
As proved in the Lemma above, the matrix $-L$ can be assumed to be the generator of a continuous-time Markov chain with transition probability matrix $P(t)$. Therefore, the equation above can be rewritten as
\begin{eqnarray}
\bar{x}_i=e^{\Delta t\,d_i/2}\sum_{{i_1}=1}^n \sum_{{i_2}=1}^n\cdots \sum_{{i_N}=1}^n p_{i\, i_{1}}\,e^{\Delta t\,d_{i_1}} 
p_{i_1\, i_{2}}\,e^{\Delta t\,d_{i_2}} \cdots \nonumber\\
\times \,\,p_{i_{N-1}\,i_N}(\Delta t)e^{\Delta t\,d_{i_N}/2}\,v_{i_N}.
\end{eqnarray}
\end{proof}

Similarly to Lemma \ref{lemma1}, a neat picture of this general probabilistic representation can be described as follows: A random path starting at the chosen entry $i$ is generated according to the continuous-time Markov chain governed by the generator $Q$, and evolves in time by jumping randomly from $i$ to any state on S. Along this process, $N$ given functions $\eta_k$ are evaluated, and the solution is obtained through the expected value of a suitable multiplicative functional.

The Lemma \ref{lemma1} and Theorem \ref{theor1} can be conveniently modified to represent probabilistically the complete vector solution $\bar{x}$. In fact,  it is worth observing that
the transpose of the generator $Q$, $Q^\intercal$, corresponds to the generator of the continuous-time Markov chain generated rather forward in time, being in this case the matrix transition probability $P$ solution of Kolmogorov's forward equations,
\begin{equation}
P'(t)=Q^\intercal \,P(t),\quad P(0)=\mathbbm{1}\quad \quad (t\geq 0).
\end{equation}

This can be used to generate instead a random path that starts at a given state according to a specific initial distribution, and evolves forward in time governed by a continuous-time Markov chain generated by a suitable generator, which in practice corresponds to the transpose
of the generator of the backward equation.  However, note that $Q=Q^\intercal$, since the matrix $A$ is symmetric, and therefore it holds that the generator of the continuous Markov chain forward in time coincides with the generator backward in time.
This is mathematically formalized through the following Lemma:
\begin{lemma}
Let $i$ and $j$ be discrete random variables on the state space $S=\{1,2,\cdots,n\}$. The random variable $j$ is governed by the probability function $p_j=v_j/\sum_{l=1}^n v_l$ provided $v_j\geq 0$, while the random variable $i$ by the probability function $p_{ij}(t)$ being the transition probabilities of a continuous-time Markov chain generated by the infinitesimal generator $Q=-L $ and evaluated at time $\Delta t$. Then,  the vector $y=e^{\Delta t D/2}e^{-\Delta t L} e^{\Delta t D/2}\,v$, can be represented probabilistically as 
\begin{equation}
y_i=e^{\Delta t\,d_i/2} V\,\mathbf{E}[\eta],
\end{equation}
where $V= \sum_{l=1}^n v_l$, $\eta=e^{\Delta t\,d_j/2}$, and $\mathbf{E}[\eta]$ its expected value.
\end{lemma}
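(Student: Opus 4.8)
The plan is to follow the proof of Lemma~\ref{lemma1} almost verbatim, the only genuinely new ingredient being the symmetry of $A$, which allows the underlying Markov chain to be run \emph{forward} in time starting from a random initial state instead of \emph{backward} from the fixed index $i$.

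First I would use that $D$ is diagonal to write, exactly as in Eq.~(\ref{lema1.1}),
\[
y_i=\sum_{j=1}^n e^{\Delta t\,d_i/2}\,(e^{-\Delta t L})_{ij}\,e^{\Delta t\,d_j/2}\,v_j,
\]
and then invoke the observation already established in Lemma~\ref{lemma1} that $Q=-L$ is a legitimate infinitesimal generator on $S$, so that $(e^{-\Delta t L})_{ij}=p_{ij}(\Delta t)$. At this point the argument diverges from that of Lemma~\ref{lemma1}: since $A$ is symmetric and $D$ is diagonal, $L=D-A$ is symmetric, hence $Q=Q^\intercal$ and $P(\Delta t)=e^{-\Delta t L}$ is a symmetric stochastic matrix, so that $p_{ij}(\Delta t)=p_{ji}(\Delta t)$. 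This is exactly the remark made just before the statement — the generator of the forward chain coincides with $Q$ — and it lets me reinterpret the sum with $j$ playing the role of the initial state and $i$ the role of the state occupied after time $\Delta t$.

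Next I would renormalize the vector $v$: because $v_j\ge 0$ for all $j$, the quantity $V=\sum_{l=1}^n v_l$ is nonnegative and, assuming $V>0$, $p_j:=v_j/V$ defines a probability mass function on $S$. Substituting $v_j=V\,p_j$ and using the symmetry $p_{ij}(\Delta t)=p_{ji}(\Delta t)$ gives
\[
y_i=e^{\Delta t\,d_i/2}\,V\sum_{j=1}^n p_j\,p_{ji}(\Delta t)\,e^{\Delta t\,d_j/2}.
\]
The remaining sum is precisely the expectation, over the joint law in which $j$ is drawn from $(p_j)$ and the chain is then advanced one step, of the functional $\eta=e^{\Delta t\,d_j/2}$ evaluated on the realizations whose terminal state is the prescribed index $i$; this yields the claimed identity $y_i=e^{\Delta t\,d_i/2}\,V\,\mathbf{E}[\eta]$.

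I do not expect a real obstacle: the proof is essentially a transpose-and-renormalize manipulation layered on top of Lemma~\ref{lemma1}. The single point that warrants a careful sentence is the precise meaning of ``any entry $i$'' once $i$ is itself the (random) endpoint of the path — namely that $\mathbf{E}[\eta]$ is to be understood as the expectation of $\eta$ over the paths terminating at the given index $i$ (equivalently, a conditional expectation weighted by the probability of landing there) — together with the role of the hypothesis $v_j\ge 0$, which is exactly what is needed for $(p_j)$ to be a bona fide probability distribution.
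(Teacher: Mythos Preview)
Your proposal is correct and follows essentially the same route as the paper: start from Eq.~(\ref{lema1.1}), factor out $V$ to turn $v_j$ into the probability mass $p_j$, replace $(e^{-\Delta t L})_{ij}$ by $p_{ji}$ via the symmetry $Q=Q^\intercal$, and read off the expectation. If anything, you are more explicit than the paper both about where the symmetry is used and about the interpretation of $\mathbf{E}[\eta]$ as a restricted expectation over paths terminating at the given $i$, a point the paper leaves implicit.
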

\begin{proof}
Since the proof is similar of the previous Lemma's proof, for completeness we sketch here only the main differences. From Eq. (\ref{lema1.1}) in Lemma \ref{lemma1}, $y_i$ can be rewritten as follows
\begin{equation}
y_i=\left(\sum_{l=1}^n v_l\right) \sum_{j=1}^n e^{\Delta t\,d_i/2}(e^{-\Delta t L})_{ij} e^{\Delta t\,d_j/2}\,\frac{v_j}{\sum_{l=1}^n v_l}.
\end{equation}
Whenever $v_j\geq 0$, $v_j/\sum_{l=1}^n v_l$ can be defined as a suitable probability function $p_j$ for the discrete random variable
$j$ on S. Similarly than in the previous Lemma, $P$ is the transition probability matrix for the continuous-time Markov chain generated 
by $-L$, therefore it holds 
\begin{equation}
y_i=\left(\sum_{l=1}^n v_l\right) \sum_{j=1}^n e^{\Delta t\,d_i/2}p_{ji} e^{\Delta t\,d_j/2}\,p_j,
\end{equation}
and hence $y_i$ can be represented as $e^{\Delta t\,d_i/2} \left(\sum_{l=1}^n v_l\right) \mathbf{E}[e^{\Delta t\,d_j/2}]$.
\end{proof}

As before, we used the Lemma \ref{lemma1} to formulate a general theorem, which allows in practice to represent probabilistically
the vector solution $\bar{x}$.

\begin{theorem}\label{th2}
Let $i_k$, $k=1,\ldots,N$, and $j$, $N+1$ be discrete random variables with outcomes on \\$S=\{1,2,\cdots,n\}$. The probabilities $p_{i_{k-1}\,i_k}(t)$, $k=2,\ldots,N$, and $p_{i\,i_1}(t)$ for $k=1$  correspond to the transition probabilities of a continuous-time Markov chain generated by the same infinitesimal generator $-L$ for each $\Delta t$ and $k$, while for the random variable $j$ the probability $p_j$ is given by  $p_j=v_j/\sum_{l=1}^n v_l$ provided $v_j\geq 0$. Then, we have that the vector solution $\bar{x}$ in Eq. (\ref{eq_general1}) can be represented as
\begin{equation}
\bar{x}_i=e^{\Delta t\,d_i/2}V\,\mathbf{E}[\prod_{k=1}^N \eta_k ],\label{probcomplete}
\end{equation}
where $\eta_k= e^{\Delta t\,d_{i_k}}$, $k=1,\ldots,N-1$, and $\eta_N=e^{\Delta t\,d_{i_N}/2}$, and $V=\sum_{l=1}^n v_{l}$.
\end{theorem}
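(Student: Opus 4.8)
The plan is to expand the matrix product $\bar{x}=\left(e^{\Delta t D/2}e^{-\Delta t L} e^{\Delta t D/2}\right)^N v$ entrywise exactly as in the proof of Theorem~\ref{theor1}, and then recognize the rightmost factor $e^{\Delta t D/2}v$ not as a deterministic vector but as $V$ times an expectation over the distribution $p_j=v_j/V$. Concretely, I would start from Eq.~(\ref{general_result}), which already gives
\begin{equation*}
\bar{x}_i=e^{\Delta t\,d_i/2}\sum_{i_1,\ldots,i_N} (e^{-\Delta t L})_{i i_1} e^{\Delta t d_{i_1}} (e^{-\Delta t L})_{i_1 i_2}\cdots e^{\Delta t d_{i_{N-1}}} (e^{-\Delta t L})_{i_{N-1} i_N} e^{\Delta t d_{i_N}/2} v_{i_N}.
\end{equation*}
By Lemma~\ref{lemma1}, each factor $(e^{-\Delta t L})_{i_{k-1} i_k}$ equals the transition probability $p_{i_{k-1} i_k}(\Delta t)$ of the continuous-time Markov chain with generator $Q=-L$; since $A$ (hence $L$, hence $Q$) is symmetric, the forward and backward generators coincide, so the same chain describes the path whether read forward or backward and no transposition is needed.

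The one new ingredient over Theorem~\ref{theor1} is the treatment of the terminal factor. Writing $v_{i_N}=V\,p_{i_N}$ with $V=\sum_{l=1}^n v_l$ and $p_{i_N}=v_{i_N}/V$ (legitimate whenever $v_{i_N}\ge 0$, as assumed), I would pull $V$ out of the sum and reinterpret the remaining sum over $i_N$ as an expectation over the extra random variable $j:=i_N$ distributed according to $p_j$. Then the whole multi-index sum becomes
\begin{equation*}
\bar{x}_i=e^{\Delta t\,d_i/2}\,V \sum_{i_1,\ldots,i_{N-1},j} p_{i i_1}\,e^{\Delta t d_{i_1}}\,p_{i_1 i_2}\cdots p_{i_{N-1} j}\, e^{\Delta t d_j/2}\, p_j,
\end{equation*}
which is exactly $e^{\Delta t d_i/2}\,V\,\mathbf{E}\!\left[\prod_{k=1}^N \eta_k\right]$ with $\eta_k=e^{\Delta t d_{i_k}}$ for $k<N$, $\eta_N=e^{\Delta t d_{i_N}/2}$ (identifying $i_N$ with $j$), and the product of the $p$'s being precisely the joint law of the path $(i_1,\ldots,i_{N-1},j)$ obtained by running the chain $N-1$ steps from $i$ and then drawing the terminal index from $p_j$ — or, equivalently by symmetry of $Q$, by drawing $j\sim p_j$ and running the chain forward.

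Since Lemma~\ref{lemma1} (and the intermediate Lemma on the forward/$p_j$ representation) are both available, this is essentially a bookkeeping argument and there is no genuine obstacle; the only point requiring a line of care is making the change of viewpoint on the terminal factor rigorous — namely, that the combined object "run the chain from $i$ for $N-1$ steps, then sample $j$ from $p_j$" is a bona fide probability measure on $S^{N}$ whose marginals multiply as written, so that the iterated sum legitimately collapses to a single expectation of the multiplicative functional $\prod_k \eta_k$. I would state this as the reason the proof "is similar to the previous ones" and only sketch the differing terminal step, mirroring the abbreviated style used for the preceding Lemma's proof.
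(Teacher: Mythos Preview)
Your proposal is correct and follows exactly the route the paper intends: the paper does not supply an explicit proof of Theorem~\ref{th2} but states only that it is obtained ``as before'' from the preceding Lemma, i.e., by repeating the argument of Theorem~\ref{theor1} with the terminal factor $v_{i_N}$ rewritten as $V\,p_{i_N}$. Your expansion from Eq.~(\ref{general_result}), identification of $(e^{-\Delta t L})_{i_{k-1}i_k}$ with $p_{i_{k-1}i_k}(\Delta t)$, and reinterpretation of the rightmost sum as an expectation over $j\sim p_j$ is precisely this, so nothing further is needed.
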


\section{The algorithm}\label{algorithm}
To implement the theorems above in order to obtain numerically a single entry or rather the full vector solution, we need to choose a finite sample of given size $M$, replacing in such a way the expected value by the arithmetic mean. Accordingly this entails a statistical error that it will be discussed in the next section. Here we present the algorithm implemented so far to compute the solution, but before doing that we describe the numerical method used to generate in practice the continuous-time Markov chain.
Let $p_{ij}(t)$ be the transition probability matrix, then the Kolmogorov backward equation in Eq. (\ref{backward}) can be equivalently represented as the following system of integral equations 
\begin{equation}
p_{ij}(t)=\delta_{ij}e^{-d_i t}+\sum_{j\neq i}\int_0^t ds\, d_i\, e^{-d_i\, s} k_{ij} p_{ij} (t-s),\label{paths}
\end{equation}
where $k_{ij}=L_{ij}/d_i$. Let $S_0,S_1,\ldots$ be a sequence of independent exponential random times picked up from the exponential probability density $p(S_i)=d_i\,e^{-d_i S_i}$. The integral equations above, along with the sequences of random times, can be used to simulate a path according to the following recursive algorithm: Generate a first random time $S_0$ that obeys the
exponential density function. Then, depending on whether $S_0 < t$ or not, two
different routes are taken. If $S_0 > t$, the algorithm is stopped, and no jump from the state $i$ to a different state is taken.
If, on the contrary, $S_0 < t$, then the state $i$ jumps to a different state $j$ according to the probability function $k_{ij}$, and a new second random number exponentially distributed $S_1$
is generated. If $S_1 < (t-S_0)$ the algorithm proceeds
repeating the same elementary rules, otherwise it is stopped. To illustrate the procedure above,  in Fig. \ref{path} we show two random paths of a continuous-time Markov 
chain corresponding to an adjacency matrix of a small world network of size $100$. 

\begin{figure}[!t]
\centering
\includegraphics[width=3.5in,angle=-90]{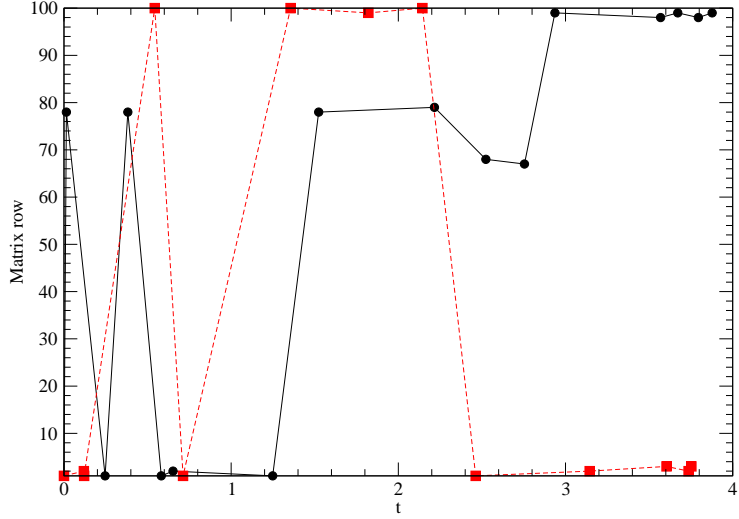}
\caption{Two different sampled paths obtained for computing the first entry of the vector $e^{4 A} \bf{1}$. The paths correspond to random jumps to different states of a small-world network. 
The size of the network is $100$, the initial state is $i=1$, and the discretization parameter $\Delta t=0.25$.}
\label{path}
\end{figure}

\begin{algorithm}
\caption{Algorithm to compute a single entry $i$ of the vector solution $x$}\label{pseudocode1}
\begin{algorithmic}
\Require $i,\Delta t,N,M$
\State $x_i=0$
\For {$l=1,M$}
\State $\eta=1$,$j=i$
\For {$n=1,\ldots,N$}
\State $\eta=\eta e^{d_{j} \Delta t/2} $
\State generate($\tau$)
\While{$\tau<\Delta t$}
\State generate(S), generate(j)
\State $\tau=\tau+S$
\EndWhile
\State $\eta=\eta e^{d_{j} \Delta t/2} $
\EndFor
\State $x_i=x_i+(v_j \eta)/N$
\EndFor
\end{algorithmic}
\end{algorithm}

Algorithm {\ref{pseudocode1}} describes a pseudocode corresponding to the implementation of Theorem \ref{th1}, which allows one to compute single entries of the vector solution, while Algorithm {\ref{pseudocode2} describes the pseudocode for computing probabilistically the complete vector solution $\bar{x}$, mathematically formalized in Theorem \ref{th2}. 

\begin{algorithm}
\caption{Algorithm to compute the complete vector solution $x$}\label{pseudocode2}
\begin{algorithmic}
\Require $\Delta t,N,M,n$
\State $x_i=0,V=\sum_{j=1}^n v_l$
\For {$l=1,M$}
\State generate(i)
\State $\eta=V,j=i$
\For {$n=1,\ldots,N$}
\State $\eta=\eta e^{d_{j} \Delta t/2} $
\State generate($\tau$)
\While{$\tau<\Delta t$}
\State generate(S), generate(j)
\State $\tau=\tau+S$
\EndWhile
\State $\eta=\eta e^{d_{j} \Delta t/2} $
\EndFor
\State $x_i=x_i+\eta/N$
\EndFor
\end{algorithmic}
\end{algorithm}

\section{Computational complexity of the Monte Carlo algorithm}\label{complexity}

To estimate properly the computational cost of the algorithms above, the tasks inside the two nested loops were analyzed separately from those that were composed inside the {\it do-while} loop, and those outside it. Hence,
\begin{equation}
T_{CPU}=T_{in}+T_{out}=\sum_{l=1}^M\sum_{n=1}^N (t^{in}_{ln}+t^{out}_{ln}).
\end{equation}
Note that the 
computational 
cost $T_{in}$ of the inside task, which accounts for
the time spent in generating the sequences of random times for the evolving
paths, depends on the given matrix, while the cost of the outside tasks $T_{out}$ 
are totally independent. In fact, the computational time requires to generate a random paths is random, and in practice depends on the specific entries of the row of 
the matrix $j$ randomly visited by the paths, including the connectivity and degree $d_j$ of the adjacency matrix of the associated graph. More specifically, 
the degree affects directly the time spent by the algorithm inside the  {\it do-while} loop, since the mean time $<S_j>$ of the exponential probability density governing the random 
time $S_j$ is given by $1/d_j$. In view of the random jumping through the rows of the matrix, in the following let us assume that the computational time spent in total for all random paths 
can be reasonably approximated as
\begin{equation}
T_{in}=\alpha_{in}\frac{\Delta t}{\bar{\tau}}N\,M.
\end{equation}
Here $\bar{\tau}=1/\bar{d}$ is the corresponding mean time value obtained for a suitable matrix, and its associated adjacency matrix, with an average degree $\bar{d}$ given by
\begin{equation}
\bar{d}=\frac{1}{n}\sum_{i=1}^n d_i,
\end{equation}
and $\alpha_{in}$ the corresponding proportionality constant. Such a constant takes into account the computational cost for evaluating the two functions  
($generate(S)$, $generate(j)$) in Algorithms \ref{pseudocode1} and \ref{pseudocode2}.  These functions are responsible for generating both, a random time for the evolving paths ($generate(S)$), 
and a random number $j$ governed by the probability function $k_{ij}$ defined in Eq. (\ref{paths}), which determines the row of the matrix where to jump ($generate(j)$).
Note that the cost for generating the exponential random time using the function $generate(S)$ is fully independent of the matrix size,   
which is not the case when generating the random jumping using the function $generate(j)$. In fact, for a matrix with arbitrary different matrix coefficients the probability function 
described by $k_{ij}$ 
is in general nonuniform, and therefore the cost for generating a random $j$ increases at most linearly with the matrix size. 
However, to improve the performance of this function more efficient algorithms can be implemented, such as the row-searching method based in practice in a binary search tree method as proposed in \cite{Rudi}.  
Nevertheless, interestingly, for 
the specific case of the adjacency matrix of 
undirected networks it turns out that such a probability function becomes uniform, since all nodes are equally probable to jump, and therefore the computational cost for generating the random $j$ becomes
fully independent of the matrix size. Indeed such a random $j$ can be trivially generated by simply multiplying a random number uniformly distributed between $0$ and $1$ by the degree
of the $i$ node, and finally rounding to the nearest integer.

Concerning the time spent by the remaining outside tasks, it can be readily estimated as
\begin{equation}
T_{out}=\alpha_{out}N\,M,
\end{equation}
where $\alpha_{out}$ is the corresponding proportionality constant. Recall that $\Delta t=\beta/N$, therefore it holds that 
\begin{equation}
T_{CPU}=\alpha_{in}\beta \bar{d}\,M+\alpha_{out}\frac{\beta}{\Delta t}\,M. \label{tcpu}
\end{equation}
Note that for a sufficiently small $\Delta t$, the predominant term is the second one, which appears to be almost independent of the matrix, while for a large $\Delta t$ the opposite behavior occurs. This is in good agreement with the results shown in Table \ref{Table00}, corresponding to the CPU time spent by the Monte Carlo algorithm when computing
the total communicability of two different networks characterized by different average degrees.

\begin{table}[htbp]
	\begin{center}
		{\tt
			\begin{tabular}{ccc}\hline
				{\bf $\Delta t$}
				&{\bf CPU Time SM  (s)}&{\bf CPU Time SC (s)}\\\hline
				$0.5$& $2.08$&$2.91$ \\
				$0.25$&$2.88$& $3.85$ \\
				$0.125$&$4.11$& $4.96$ \\
				$0.0625$&$6.33$& $6.86$ \\
				$0.03125$&$10.71$& $11.05$ \\
				$0.0156$&$19.51$& $19.59$ \\\hline
			\end{tabular}
		}
	\end{center}
	\caption{CPU time spent by the Monte Carlo algorithm for computing the total communicability of a small-world network (SM), and a scale-free network (SC) of size $10^6$ nodes. The average
	degree of the small-world network is $\bar d=2.4$, while for the scale-free network is $\bar d=4$. The sample size is $M=10^6$}
	\label{Table00}
\end{table}

\section {Numerical errors}\label{errors}

In computing a single entry of the vector solution or the complete vector, we should consider, in practice, 
two sources of numerical error. In fact, we have to face the error due to the splitting in Eq. (\ref{errorLie}) and (\ref{errorStrang}), and 
the error coming from necessarily replacing the expected value in (\ref{probcomplete}) by a finite sum over a given finite sample of size $M$.  
In the following we focus exclusively on the numerical scheme for computing the full vector solution, since the analysis of the error for the companion method for computing a single entry turns out to be identical. To be more precise, the global error made in computing probabilistically the vector solution can be evaluated as
\begin{equation}
\varepsilon=x-e^{\Delta t\,d_i/2}V\,\frac{1}{M}\sum_{l=1}^M[\prod_{k=1}^N \eta_k^l ]=\varepsilon_1+\varepsilon_2,\label{errortotal}
\end{equation}
where $\eta_k^l$ corresponds to the $l$ realization of the $\eta_k$ random variable defined in Theorem \ref{th2}, and
\begin{eqnarray}
\varepsilon_1=x-\left(e^{\Delta t D/2}e^{-\Delta t L} e^{\Delta t D/2}\right)^N\,v\\
\varepsilon_2=\left(e^{\Delta t D/2}e^{-\Delta t L} e^{\Delta t D/2}\right)^N\,v-e^{\Delta t\,d_i/2}V\,\frac{1}{M}\sum_{l=1}^M[\prod_{k=1}^N \eta_k^l ]
\end{eqnarray}

As mentioned already in Sec. \ref{theory}, the first error $\varepsilon_1$ is due merely to the splitting procedure, and the error is of the order of $O(\Delta t)$ or
$O(\Delta t^2)$ depending on whether the Lie or the Strang splitting is used.  

The second error, $\varepsilon_2$ , is the pure Monte Carlo statistical error, and of order of $O(M^{-1/2})$. In fact, it is well known that the arithmetic mean
appearing in (\ref{errortotal}) provides the best unbiased estimator for the expected
value in (\ref{probcomplete}). In practice, one should simulate on the computer the
random variables, based on generating random numbers.  By doing so, the error made in replacing the expected value 
with the mean over a finite size sample is statistical in nature.
More precisely, $\varepsilon_2$ turns out to be, for a large $M$ value, approximately a random Gaussian variable with standard deviation proportional to $M^{-1/2}$ , i.e.,
\begin{equation}
 \varepsilon_2 \approx \frac{\sigma \nu}{M^{1/2}},
 \end{equation}
where $\sigma$ denotes the square root of the variance, and $\nu$ is a standard normal (i.e., $N (0, 1)$) random variable. 
All this clearly shows that the proposed Monte Carlo method could in principle have a poor numerical performance, and also that the error is merely statistical, so it can only be bounded by some quantity with a certain degree of confidence. However, there already exist many available statistical techniques, such as variance reduction, multilevel Monte Carlo, and quasi-random numbers, that can be used, in practice, to improve greatly the order of the global error, and consequently the overall performance of the algorithm. 

To illustrate the global error of the numerical method, and its convergence, several examples were run to examine the specific problem of computing
the total communicability of a complex network (see \cite{Benzi2} e.g.) for different network sizes. The error was computed assuming the solution obtained using the built-in function {\it expm} of Matlab as if it were the theoretical
solution. The underlying algorithm consists essentially of a rational approximation by means of the Pad\'e approximation of an underlying series expansion of the matrix, along with a direct method for computing the inverse of a suitable linear algebra problem, which in practice entails an LU decomposition. Therefore, since it is based on more highly accurate methods, we can assumed it to be a highly accurate approximation of the generally unavailable theoretical solution.  In Fig. \ref{fig_errorTC} the absolute error is
plotted as a function of the time step $\Delta t$ for a network of 100 nodes in (a), and 1000 in (b), and the two different splitting methods. The solid line corresponds in practice to the purely splitting error $\varepsilon_1$, while the dashed line corresponds to the error of the complete Monte Carlo method $\varepsilon$. Surprisingly, both the Lie and Strang splitting seem to show the same order of the error (which is $2$ as can be seen from the slope of the ancillary function plotted in the figure to help the reader). This can be readily explained from Eq. (\ref{errorLie}) and the definition of total communicability.
Indeed, by definition of the total communicability of a network \cite{Benzi2}
\begin{equation}
TC=({\bf 1},\,e^A\,{\bf 1}),\label{TC}
\end{equation}
where $\bf{1}$ is a vector of ones, $(\cdot,\cdot)$ the scalar product, and since $L\bf{1}=0$, it holds that
\begin{equation}
\left[ ({\bf1},D\,L{\bf 1})-({\bf 1},L\,D{\bf 1})\right]=0.
\end{equation}
Therefore, from (\ref{errorLie}) the order of the local error for the Lie splitting turns out to be of order $O(\Delta t^3)$, and in particular
of order $O(\Delta t^2)$ as the global error, as it happens for the Strang splitting. However, quantitatively the global error for the Strang splitting appears to be smaller
than the error obtained with the Lie splitting. The reason can be found in that the proportionality constant multiplying $\Delta t^2$ for the Strang splitting is smaller. Rather, this does not occur when computing the communicability of a single node, which in practice entails computing a single entry
of the vector solution. In fact, Fig. \ref{fig_errorConepoint} shows that the error of the Strang splitting is one order of magnitude larger than
for the Lie splitting, being in both cases the order of convergence as was theoretically expected. 

Moreover, it is worth observing that the absolute error tends to a constant value for sufficiently small values 
of $\Delta t$, being such a critical value larger when a smaller sample size is used. This is because for this range of values of $\Delta t$ the global error comes
predominantly from the statistical error (which is independent of $\Delta t$), since the splitting error is already much smaller than the statistical error. In practice this means that from a given value of $\Delta t$ it becomes useless to reduce further the time step $\Delta t$, and consequently increase the computational cost. From that point the error becomes mostly statistical, being therefore required rather to increase the sample size $M$ in order to continue reducing the global error. In fact, in Fig. \ref{fig_error_statistical} the time step $\Delta t$ was chosen to be sufficiently small, $10^{-3}$. This makes the splitting error negligible compared with the statistical error, and therefore the absolute error shown is mostly statistical, decreasing as $M^{-1/2}$ as expected by theory.

For the specific problem of computing the total communicability of a network in Eq. (\ref{TC}), 
it can be analytically estimated how the numerical error depends on the network size. Concerning the error due to exponential splitting, from Eq. (\ref{errorStrang}) and since $L\bf{1}=0$, we obtain
\begin{equation}
\epsilon_{TC}\le |({\bf 1},D\,L\,D,{\bf 1})|\Delta t^2.
\end{equation}
By using the Cauchy-Schwarz inequality, the error can be bounded as follows
\begin{equation}
\epsilon_{TC}\le \left\Vert D \right\Vert_{\infty}^2 \left\Vert L \right\Vert_{\infty}\,\Delta t^2=
2d_{max}^3 \,\Delta t^2,\label{error_size}
\end{equation}
where $d_{max}$ corresponds to the maximum degree of the network. Therefore, for those networks characterized by having
a maximum degree almost independent of the network size, such as the small-world networks, the error is almost
independent of the size. In fact, this is in agreement with the results plotted in Fig. \ref{fig_errorTC}, where the total
communicability has been computed for two different network sizes. Rather for those other networks where the maximum degree
increases with the network size, the error increases with the network size requiring therefore to reduce  $\Delta t$ accordingly to keep constant the error.
Concerning the other source of errors, as it was mentioned above this concerns the statistical error due to the finite sample of the Monte Carlo simulations. However, this error turns out to be independent of the network size as it is shown in Fig. 
\ref{fig_error_statistical}. This should not be so surprising since as it happens for Monte Carlo integration of 
definitive integrals, the underlying error when computing numerically the expected value of the function does not depend on the number of dimensions of the integral. In fact, this is
the main advantage of Monte Carlo integration against most deterministic methods, which it is known to grow
exponentially with the dimensions.


\begin{figure}[!t]
\includegraphics[width=2.3in,angle=-90]{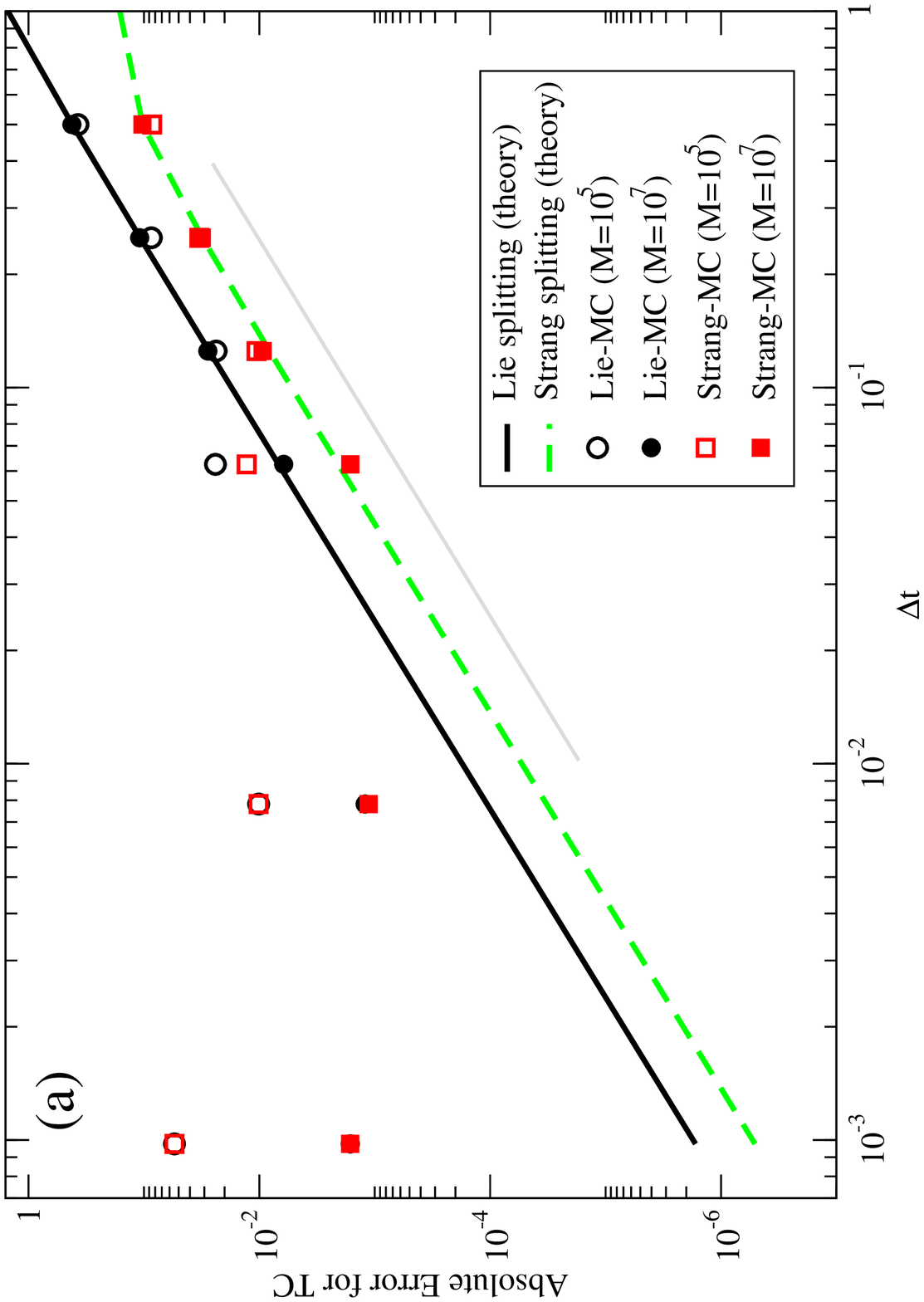}
\includegraphics[width=2.3in,angle=-90]{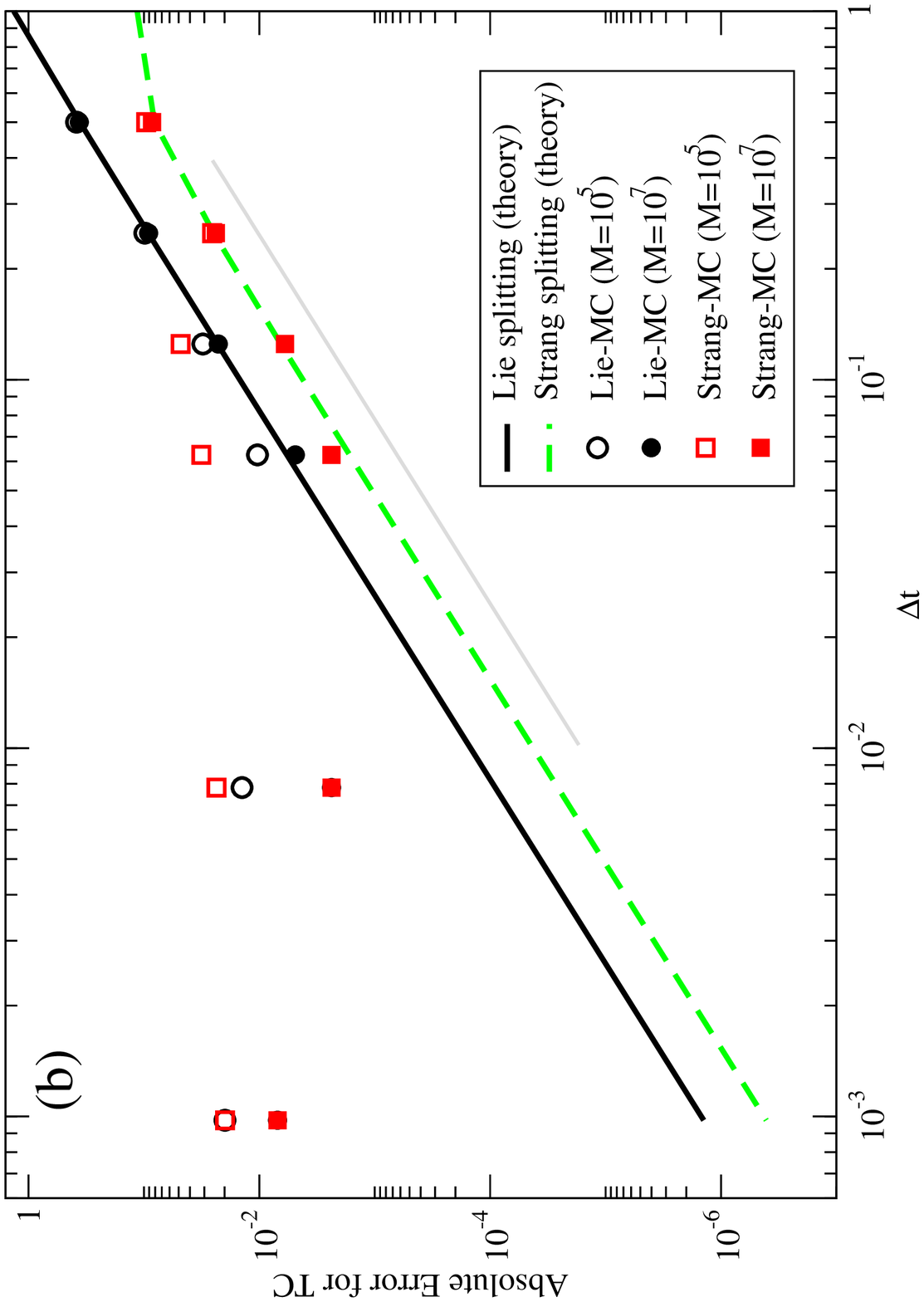}
\caption{Absolute numerical error obtained when computing the total communicability of a small-world network for different values of the time step $\Delta t$. The networks are composed of (a) 100 nodes and (b) 1000 nodes, respectively. The solid and dashed lines correspond to the theoretical solution obtained with the Lie splitting, and the Strang splitting, respectively, while the points denote the errors obtained when simulating using Monte Carlo. The gray line corresponds to an ancillary function of slope 2.}
\label{fig_errorTC}
\end{figure}

\begin{figure}[!t]

\includegraphics[width=2.0in,angle=-90]{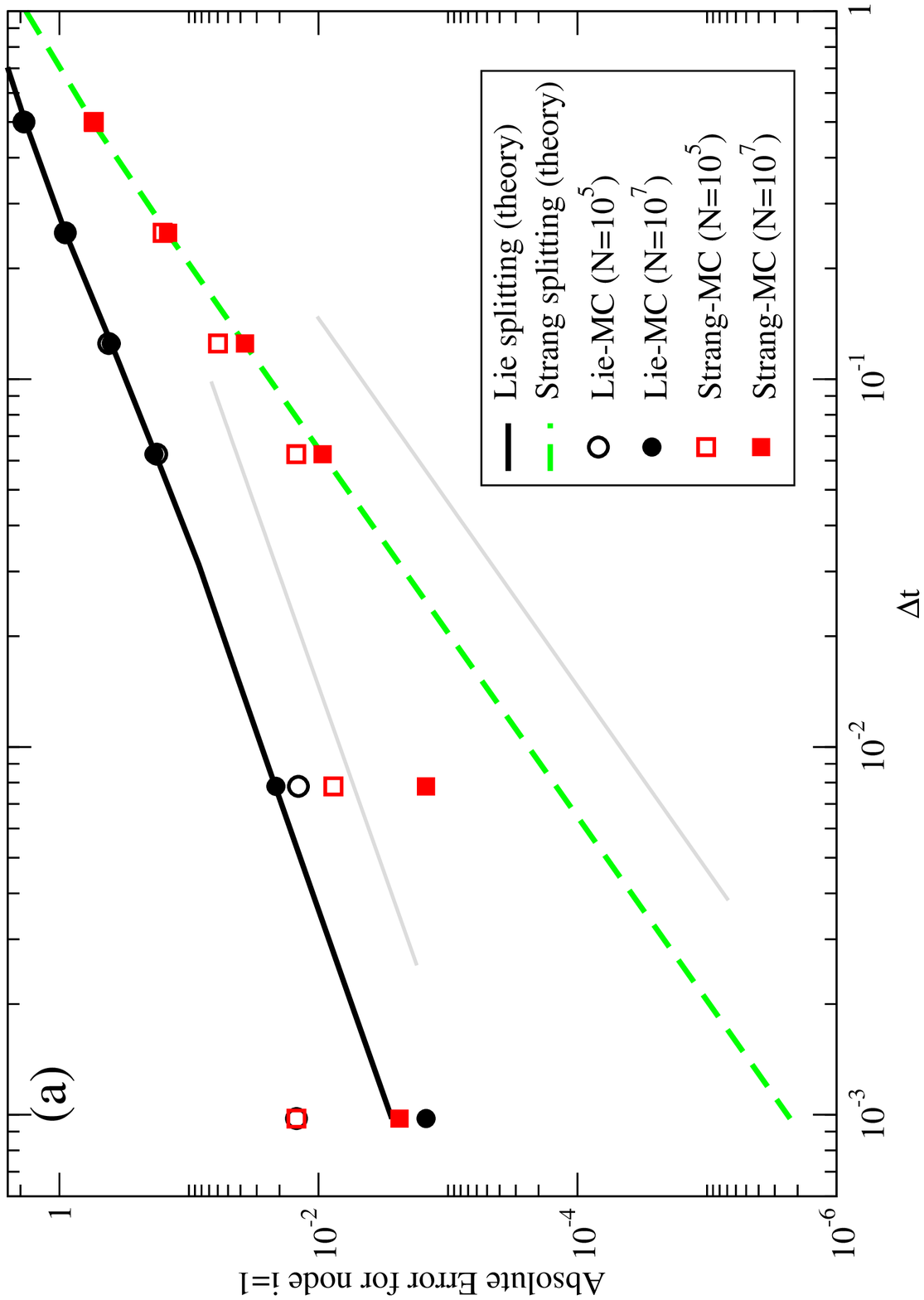}
\includegraphics[width=2.0in,angle=-90]{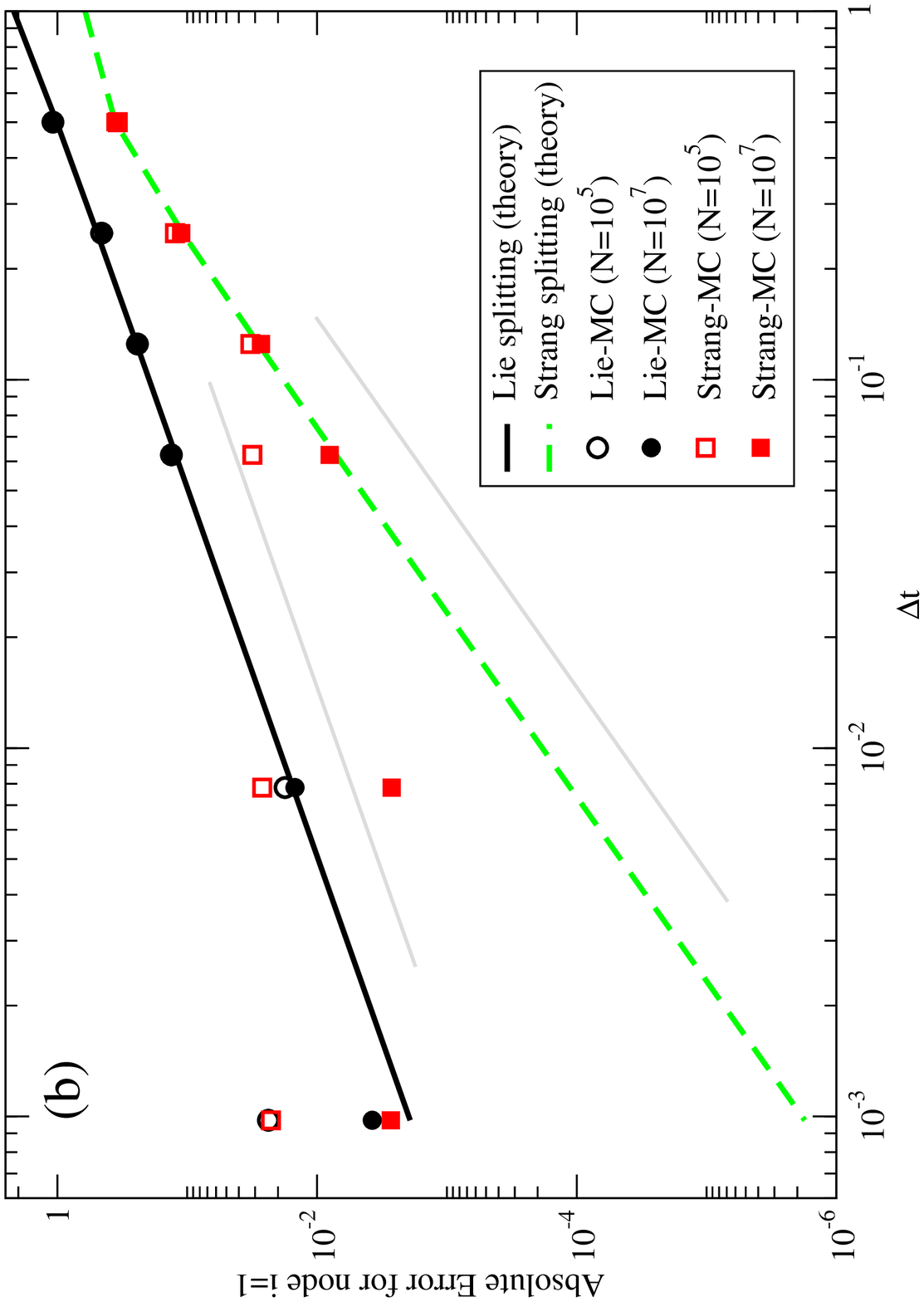}
\caption{Absolute numerical error obtained when computing the communicability of a single node ($i=1$) of a small-world network for different values of the discretization step $\Delta t$. The networks are composed of (a) 100 nodes and (b) 1000 nodes, respectively. The solid and dashed lines correspond to the theoretical solution obtained with Lie splitting, and Strang splitting, respectively, while the points denote the errors obtained when simulating using Monte Carlo. The gray lines correspond to ancillary functions of slope 1 and 2.}
\label{fig_errorConepoint}
\end{figure}

\begin{figure}[!t]
\centering
\includegraphics[width=4.5in,angle=-90]{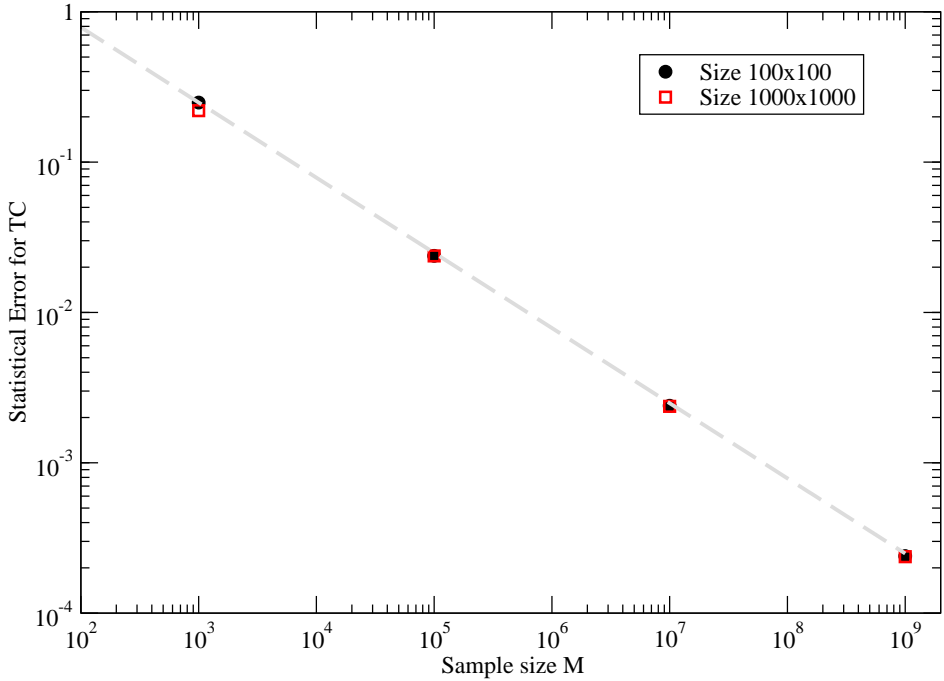}
\caption{Statistical error obtained when computing the total communicability of a small-world network for different values of the sample size $M$. The networks are composed of 100 nodes and 1000 nodes, respectively. The time step is kept fixed to $\Delta t=10^{-3}$. The gray line corresponds to an ancillary function of slope $-1/2$.}
\label{fig_error_statistical}
\end{figure}

\section{Some results and benchmarks}\label{simulations}

To illustrate the numerical method and performance of the underlying algorithm, in the following we show the results corresponding to several benchmarks run so far. They concern the numerical computation of the metric communicability in both synthetic and real complex networks.
For comparison with other methods, and to estimate the numerical errors, the Matlab toolbox {\it funm kryl} developed in \cite{Guttel}, and freely available in \cite{funkryl}, has been used. Such a code implements a Krylov subspace method with deflated restarting for matrix functions.
Concerning the Monte Carlo algorithm, it was implemented in Fortran 90, and for a fair comparison with the performance obtained with the Matlab code, no further optimization of the code or
the Fortran compiler was performed. Moreover, Fortran is purely sequential, while Matlab by default employs multi-threading architecture for running simulations. Therefore, to make a fair comparison between the systems, Matlab's multi-threading feature was completely disabled.  The simulations were run on a computer equipped with an Intel Xeon CPU E5620 at $2.40$ GHz and $96$ GB of RAM.

{\it Small-world networks}. These networks were generated in Matlab using the function {\it smallw}, freely available through the 
toolbox CONTEST \cite{CONTEST}. In Table \ref{Table0} the computational time to compute the total communicability of the network for about the same error is shown for different network sizes. Up to a network size of $10,000$ nodes, the error was estimated
using Matlab's built-in function {\it expm} as if it were the theoretical solution. For increasingly larger networks however, the high computational cost of this function makes its computation a formidable task, making it necessary therefore to resort to other methods to estimate the numerical error. For such a purpose, the aforementioned Krylov-based method was used by setting a very small value of the stopping-accuracy parameter, $10^{-16}$, as well as the restart parameter to $40$. These are also the parameters that have been modified in order to obtain a similar error 
with the Monte Carlo simulations. 
\begin{table}[htbp]
	\caption{CPU time spent for computing the total communicability of a small-world network. For the Monte Carlo method the parameters are $M=10^6$ and $\Delta t=0.03125$. The error was kept fixed to $10^{-3}$. }
	\begin{center}
		{\tt
			\begin{tabular}{ccc}\hline
				{\bf Network size}
				&{\bf CPU Time MC (s)}&{\bf CPU Time Matlab (s)}\\\hline
				$10^3$& $0.52$&$0.009$ \\
				$10^4$&$0.83$& $0.011$ \\
				$10^5$&$0.85$& $0.050$ \\
				$10^6$&$0.95$& $0.491$ \\
				$10^7$&$0.98$& $6.036$ \\
				$10^8$&$1.07$& $73.22$ \\\hline
			\end{tabular}
		}
	\end{center}
	\label{Table0}
\end{table}

It is remarkable to note that the computational cost of the Monte Carlo method appears to be almost independent of the size of the network, while increasing almost linearly for the Krylov-based method. 
For the Monte Carlo method this can explained by the fact that the maximum degree of the network is almost independent
of the network size, and consequently as explained in Sec. \ref{errors},  the numerical error. Therefore, to compute the solution within a given prescribed accuracy it is not required to modify the values of the sample size $M$, and the time step $\Delta t$ for increasingly larger sizes, thereby ensuring the same computational cost of the algorithm for any network size. Such a feature allows the Monte Carlo method to achieve a computational performance which is notably higher than the classical counterpart, based on the Krylov-based method, for large scale problems. In fact, it has been pointed out in the literature \cite{Benzi4,Rudi} and more specifically in \cite{Orecchia} through a suitable Theorem, that there exists an algorithm based on the Lanczos method capable of computing the vector solution of the action of a matrix exponential over a given vector 
in a time that grows linearly with the matrix size. This is mostly due to the sparsity of the adjacency matrix of the network, which simplifies considerably the cost of the matrix-vector products associated to each Lanczos iteration.
These theoretical findings are therefore numerically confirmed in Table \ref{Table0}, where it can be seen indeed that the CPU time for the Krylov-based method increases almost linearly with the network size. 

As discussed in Sec. \ref{algorithm}, the Monte Carlo method can be used as well to compute the full vector solution $e^{\beta A}\,v$. In the particular case of complex networks, and when $v$ is the vector with all entries equal to 1, the vector solution represents  the total subgraph communicability of each node of the network \cite{Benzi2}. More important than the quantitative values of the entries of the vector, it is the insight obtained through the ranking of the network organized by the importance of its nodes in terms of being more or less communicable inside the network, which could be of primary importance in the field of complex networks. For this purpose, and to evaluate the similarities between the rankings obtained with the Monte Carlo method and the Krylov-based method, we use the intersection distance method \cite{Boldi} on both the full set of nodes of the network, and on $10\%$ of them. The  intersection similarity distance for the top K nodes of two vectors $x$ and $y$ is defined as
\begin{equation}
isim_K(x,y):=\frac{1}{K} \sum_{i=1}^K \frac{|x_i\Delta\,y_i|}{2i}.
\end{equation}
Here $\Delta$ is the symmetric difference operator between the two vectors. In practice, small values of the intersection distance denote large similarities between the vectors, while the limiting value of $1$ suggests vectors that are totally disjointed. Since computing the intersection distance could be computationally costly for increasingly large networks, only relatively small sizes of
the network were analyzed so far. In Table \ref{Table1} the results corresponding to networks composed of $1000$ and $10,000$ nodes are shown. Note that the two ranked vectors show strong similarities, being even stronger for larger network sizes. 

\begin{table}[htbp]
	\caption{Similarity results of the two computed ranked communicability vectors obtained with the Krylov-based method, and the Monte Carlo method for different sample sizes. The network is a small-world network of size a) $1000$ nodes, and b) $10,000$ nodes. For the Monte Carlo method $\Delta t=0.03125$.}
	\begin{subtable}{.5\linewidth}
      \centering
        \caption{}
			\begin{tabular}{ccc}\hline
				{\bf Sample size M}
				&{\bf isim ($100\%$)}&{\bf  isim ($10\%$)}\\\hline
				$10^5$& $0.0037$&$0.0279$ \\
				$10^6$&$0.0022$& $0.020$ \\
				$10^7$&$0.0014$& $0.011$ \\\hline
			\end{tabular}
		\end{subtable}%
		\quad\quad
    \begin{subtable}{.5\linewidth}
      \centering
        \caption{}
        \begin{tabular}{cc}\hline
				{\bf isim ($100\%$)}&{\bf  isim ($10\%$)}\\\hline
				$4.89\times 10^{-4}$&$0.00379$ \\
				$3.39\times 10^{-4}$& $0.0021$ \\
				$2.24\times 10^{-4}$& $0.0014$ \\\hline
			\end{tabular}
		\end{subtable} 
	\label{Table1}
\end{table}

{\it Scale-free networks}. Such networks have been generated using the function {\it pref} belonging to the aforementioned toolbox CONTEST. 
In contrast to the small-world network, these 
networks are characterized by the presence of hubs, which in practice entail a much larger maximum degree, and correspondingly larger maximum eigenvalue than for the small-world networks. For 
this reason, and to avoid dealing with very large values when computing the total network communicability for large networks, it is more convenient instead to analyze the so-called normalized 
total communicability \cite{Benzi2}, which corresponds in practice to the average total communicability of the network per node. This metric can be readily obtained dividing the total network 
communicability by the network size, that is $TC_n=TC/n$. Since the value of the maximum degree increases with the network size, then in order to keep constant the numerical error it may be necessary for the Monte Carlo method to reduce the time step $\Delta t$ (or equivalently increasing the parameter $N$) accordingly. From Eq. (\ref{error_size}), and assuming $d_{max}\approx n$ as an upper-bound approximation, it holds that the time step $\Delta t$ should reduce, at most, as $n^{-1}$ (or equivalently the parameter $N$ increase linearly with $n$),  to ensure a constant numerical error when computing the normalized total network communicability for arbitrary large scale-free networks. As a result, the computational 
cost of the algorithm, estimated in Sec. \ref{complexity} as being $T_{CPU}\approx T_{out}$ for sufficiently small $\Delta t$, 
increases therefore linearly with the network size $n$ for these type of networks. 

To avoid such a computationally costly procedure, a reasonable alternative relies on computing a generalization of the communicability, that is $e^{\beta A}$, where $\beta$ is typically interpreted as an effective "temperature" of the network (see \cite{Estrada_review}, e.g.). Essentially the idea is to use the inverse of the maximum eigenvalue as the value of the parameter $\beta$, which in practice will control the rapid growth of the norm of the matrix $A$ with the size of the network.
To ensure fast convergence of the Monte Carlo solution, using $\beta=1/\lambda_{max}$, where $\lambda_{max}$ is the maximum eigenvalue of A, should suffice. 
However, finding the maximum eigenvalue for large networks is computationally costly, and in the following a faster alternative, 
based on computing the maximum degree of the network, $d_{max}$, was used instead as an upper bound value.  
Note that in doing that the numerical error obtained when computing the normalized total network communicability
becomes independent of the network size. This can be proved readily as follows: From Eq. (\ref{error_size}), the error to compute
the normalized total network communicability is given by, 
 \begin{equation}
 \epsilon_{TC_n}\le 2d_{max}^3 \,\Delta t^2/n=\frac{2 d_{max}}{N^2 n}.
\end{equation}
where the time-step $\Delta t$ defined in Eq. (\ref{deff}) was replaced by $1/(d_{max}\,N)$. Now using $d_{max}\approx n$ as an upper-bound approximation, then 
it follows that the error becomes indeed totally independent of the network size, and hence the computational cost of the algorithm.

However, different values of $\beta$ could have a direct impact not only on the entries of the communicability vector, but also on the ranking of the nodes according to their communicability values, and therefore it becomes essential to analyze at least qualitatively such an issue. In Table \ref{Table3} the similarity results of two ranked communicability vectors are shown for two different network sizes. All the vector solutions are computed this time using the function {\it expm} of Matlab to minimize the error, and the comparison is done by choosing as the reference vector the ranked communicability vector with $\beta=1$.

\begin{table}[htbp]
	\caption{Similarity results of two computed ranked communicability vectors obtained for different values of $\beta$. The reference vector used for comparison corresponds to $\beta=1$. The network is a scale-free network of size a) $1000$ nodes, and b) $10,000$ nodes.}
	
	 \begin{subtable}{.5\linewidth}
      \centering
        \caption{$\lambda_{max}=10.22,d_{max}=69$}
		{\tt
			\begin{tabular}{ccc}\hline
				{\bf $\beta$}
				&{\bf isim ($100\%$)}&{\bf  isim ($10\%$)}\\\hline
				$1$& $0$&$0$ \\
				$0.5$&$0.0025$& $0.0137$ \\
				$0.125$&$0.0026$& $0.0145$ \\
				$1/\lambda_{max}$&$0.0026$& $0.0145$ \\
				$1/d_{max}$&$0.0026$& $0.0145$ \\
				\hline
			\end{tabular}
		}
		\end{subtable}%
		\quad\quad
    \begin{subtable}{.5\linewidth}
      \centering
        \caption{$\lambda_{max}=19.52,d_{max}=357$}
		{\tt
			\begin{tabular}{cc}\hline
				{\bf isim ($100\%$)}&{\bf  isim ($10\%$)}\\\hline
				$0$&$0$ \\
				$3.74\times10^{-4}$& $0.0026$ \\
				$4.01\times 10^{-4}$& $0.0029$ \\
				$4.01\times 10^{-4}$& $0.0029$ \\
				$4.27\times 10^{-4}$& $0.00315$ \\
				\hline
			\end{tabular}
		}
		 \end{subtable} 
	\label{Table3}
\end{table}
From Table \ref{Table3} it is worthwhile to observe the close similarity of the ranked vectors for different values of $\beta$, being 
even closer for larger values of the network size. Recall that for the typical accuracy asked for Monte Carlo simulations, the error is already 
higher ($10^{-3}$ in the previous examples) than the values obtained for the intersection similarities. This fact can be exploited in practice to choose 
values of $\beta$ smaller, and consequently $\Delta t$ larger, and still be able to characterize properly the communicability of the network, being indeed indistinguishable within the prescribed accuracy for the Monte Carlo simulations.

In Table \ref{Table4} the times spent to compute the normalized total communicability of scale-free networks of different sizes are shown. As for 
small-world networks, in all Monte Carlo simulations the error has been kept fixed to $10^{-3}$. Similar to the results obtained for the small-world network, the Monte Carlo method outperforms
the Krylov-based method for large size networks, having also a computational time independent of the network size, in agreement with the theoretical considerations discussed above.

\begin{table}[htbp]
	\caption{CPU time spent for computing the normalized total communicability of a scale-free network. For the Monte Carlo method the parameters are $M=10^6$ and $\Delta t=0.03125$. The error  was kept fixed to $10^{-3}$. }
	\begin{center}
		{\tt
			\begin{tabular}{ccc}\hline
				{\bf Network size}
				&{\bf CPU Time MC (s)}&{\bf CPU Time Matlab (s)}\\\hline
				$10^3$& $0.62$&$0.0068$ \\
				$10^4$&$0.71$& $0.016$ \\
				$10^5$&$0.68$& $0.038$ \\
				$10^6$&$0.71$& $0.469$ \\
				$10^7$&$0.75$& $4.08$ \\
				$10^8$&$0.76$& $52.59$ \\\hline
			\end{tabular}
		}
	\end{center}
	\label{Table4}
\end{table}

{\it Real networks}. In Table \ref{Table5} the results corresponding to a few real networks of arbitrary large size are shown. These networks were downloaded from the freely available sparse
matrix repository {\it SuiteSparse Matrix Collection} \cite{tamu}, and correspond to undirected graphs describing the largest strongly connected components         
of the corresponding Open Street Map road networks in Europe (Europe OSM), the USA roads (USA roads), and finally a directed graph corresponding to Wikipedia. The latter network was
conveniently symmetrized following the procedure described in \cite{Benzi}. As in the previous examples, the performance of the Monte Carlo method is notably superior to the Krylov-based method,
being that the differences are even more pronounced for large network sizes.

\begin{table}[htbp]
	\caption{CPU time spent for computing the normalized total communicability of real complex networks. For the Monte Carlo method the parameters are $M=10^6$ and $\Delta t=0.03125$. The error in the norm $L^\infty$ was kept fixed to $10^{-3}$. }
	\begin{center}
		{\tt
			\begin{tabular}{cccc}\hline
				{\bf Network type}&{\bf Size}&{\bf CPU Time MC (s)}&{\bf CPU Time Matlab (s)}\\\hline
				Wikipedia &$7,133,814$&$0.75$&$5.104$ \\
				USA roads &$23,947,347$ &$0.77$&$9.925$ \\
				Europe OSM& $50,912,018$&$0.79$&$11.79$ \\\hline
			\end{tabular}
		}
	\end{center}
	\label{Table5}
\end{table}

\section{Conclusion}\label{conclusions}

A new Monte Carlo method for computing the action of an exponential matrix on a vector has been proposed. The method is based on generating suitable random paths
corresponding to a continuous-time Markov chain governed by the associated Laplacian matrix. It extends the existing Monte Carlo methods discussed so far in the literature for solving
linear algebra problems, for dealing now with more involved functions of matrices such as the matrix exponential. 
An important advantage of the Monte Carlo method is that the probabilistic representation of the solution allows for efficiently computing single 
entries of the vector solution, along with global metrics involving the full matrix, such as the total communicability in the field of complex networks. Moreover, since the solution is obtained through averaging independent calculations it is especially
well suited for parallel computation. In fact, it is known that the Monte Carlo algorithm turns out to
be fully scalable and naturally fault tolerant. To test the performance of the algorithm, several benchmarks have been used, consisting of a variety of complex networks (real and synthetic) for computing the communicability of the network.
The numerical errors of the method have been analyzed through the paper. The results have been compared with a classical Krylov-based method, showing a notably superior performance of the algorithm for large-scale matrices, both in terms of computational time and memory requirements.

\section*{Acknowledgments}

This work  was  supported  by Funda\c{c}\~{a}o  para a  Ci\^{e}ncia e  a Tecnologia  under Grant  No. UID/CEC/50021/2019.                                   


\bibliographystyle{siamplain}
\bibliography{references}

\end{document}